\renewcommand{\subsectionmark}[1]{}
\newenvironment{plainfootnotes}{
  \deffootnote[0em]{0em}{0em}{}
}{
  \deffootnote[1em]{1.5em}{1em}{\textsuperscript{\thefootnotemark}}
}
\newif\ifsubsectionstylePrefixParagraph
\newif\ifsubsectionstyleRunin
\titleformat{\section}[hang]
{\Large\sffamily\bfseries}
{\thesection\hspace{0.1em}{|}}{0.25em}{}
\titleformat{\subsection}[runin]
{\normalfont\bfseries}
{\S\hspace{.1em}\thesubsection}{0.35em}{}[.\hspace*{.5em}]
\titleformat{\subsection}[runin]
{\normalfont\bfseries}
{\thesubsection\hspace{.1em}|}{0.25em}{}[.\hspace*{.5em}]
\titleformat{\subsection}[wrap]
{\normalfont\bfseries\selectfont\filright}
{\S\thesubsection}{.35em}{}
\titleformat{\subsection}[wrap]
{\normalfont\bfseries\selectfont\filright}
{\thesubsection\hspace{.1em}|}{.25em}{}
\titleformat{\subsubsection}[runin]
{\normalfont\bfseries}
{\S\hspace{.1em}\thesubsubsection}{0.35em}{}[.]
\titleformat{\subsubsection}[runin]
{\normalfont\bfseries}
{\thesubsubsection\hspace{.1em}|}{0.25em}{}[.]
\numberwithin{equation}{section}
\newtheorem{theoremcounter}{theoremcounter}[section]
\theoremstyle{plain}
\newtheorem{lemma}[theoremcounter]{Lemma}
\newtheorem{proposition}[theoremcounter]{Proposition}
\newtheorem{theorem}[theoremcounter]{Theorem}
\newtheorem{maintheoremcounter}{maintheoremcounter}
\newtheorem{maintheorem}[maintheoremcounter]{Theorem}
\newtheorem{maincorollary}[maintheoremcounter]{Corollary}
\theoremstyle{definition}
\theoremstyle{remark}
\newtheorem{example}[theoremcounter]{Example}
\newtheorem{remark}[theoremcounter]{Remark}
\newtheorem{remarks}[theoremcounter]{Remarks}
\newtheorem*{mainremark}{Remark}
\newtheorem*{mainremarks}{Remarks}
\newtheorem*{remarkcomputation}{Computation}
\newenvironment{mainremarksenumerate}
{
\begin{mainremarks}
\begin{enumerate}[label=(\arabic*), leftmargin=0pt,labelindent=\parindent,itemindent=!]
}{
\end{enumerate}
\end{mainremarks}
}
\let\cal\undefined
\newcommand{\tx}{\ensuremath{\text}}
\newcommand{\tbf}{\bfseries}
\newcommand{\thdash}{\nbd th}
\newcommand{\nbd}{\nobreakdash-\hspace{0pt}}
\newcommand{\cal}{\ensuremath{\mathcal}}
\renewcommand{\frak}{\ensuremath{\mathfrak}}
\newcommand{\cO}{\ensuremath{\cal{O}}}
\newcommand{\frake}{\ensuremath{\frak{e}}}
\newcommand{\frakp}{\ensuremath{\frak{p}}}
\newcommand{\rmt}{\ensuremath{\mathrm{t}}}
\newcommand{\rmJ}{\ensuremath{\mathrm{J}}}
\newcommand{\rmM}{\ensuremath{\mathrm{M}}}
\newcommand{\ra}{\ensuremath{\rightarrow}}
\newcommand{\lra}{\ensuremath{\longrightarrow}}
\newcommand{\amid}{\ensuremath{\mathop{\mid}}}
\newcommand{\ZZ}{\ensuremath{\mathbb{Z}}}
\newcommand{\QQ}{\ensuremath{\mathbb{Q}}}
\newcommand{\RR}{\ensuremath{\mathbb{R}}}
\newcommand{\CC}{\ensuremath{\mathbb{C}}}
\newcommand{\isdiv}{\amid}
\renewcommand{\pmod}[1]{\ensuremath{\;(\mathrm{mod}\, #1)}}
\newcommand{\Sp}[1]{\ensuremath{\mathrm{Sp}_{#1}}}
\newcommand{\T}{\ensuremath{\rmt}}
\newcommand{\rT}{\ensuremath{\,{}^\T\!}}
\newcommand{\tr}{\ensuremath{\mathrm{tr}}}
\newcommand{\diag}{\ensuremath{\mathrm{diag}}}
\newcommand{\slashdiv}{\ensuremath{\mathop{/}}}
\newcommand{\lspan}{\ensuremath{\mathop{\mathrm{span}}}}
\newcommand{\HS}{\mathbb{H}}
\newcommand{\ord}{\ensuremath{{\rm ord}}}
\renewcommand{\diag}{\ensuremath{{\rm diag}}}
\newcommand{\Qpur}{\ensuremath{\QQ_p^{\rm ur}}}
\newcommand{\fp}{\frakp}
\newcommand{\Op}{\ensuremath{\cO_\fp}}
\newcommand{\rhop}{\ensuremath{\varrho^{(g)}_{\diag,\fp}}}
\title{Sturm Bounds for Siegel Modular Forms}
\author{
Olav K. Richter
and
Martin Westerholt-Raum%
\footnote{The first author was partially supported by Simons Foundation Grant $\#200765$. The second author thanks the Max Planck Institute for Mathematics for their hospitality. The paper was partially written, while the second author was supported by the ETH Zurich Postdoctoral Fellowship Program and by the Marie Curie Actions for People COFUND Program.}
}
\newcommand{\headertitle}{Sturm Bounds}
\newcommand{\headerauthors}{O.~K.~Richter, M.~W.-Raum}
\begin{document}
\begin{plainfootnotes}
\maketitle
\end{plainfootnotes}
\thispagestyle{scrplain}

\begin{abstract}
\noindent
We establish Sturm bounds for degree~$g$ Siegel modular forms modulo a prime $p$, which are vital for explicit computations. Our inductive proof exploits Fourier-Jacobi expansions of Siegel modular forms and properties of specializations of Jacobi forms to torsion points. In particular, our approach is completely different from the proofs of the previously known cases $g=1,2$, which do not extend to the case of general~$g$.

\vspace{.25em}
\noindent{\sffamily\tbf MSC 2010: Primary 11F46; Secondary 11F33}
\end{abstract}

\vspace*{2em}


\Needspace*{4em}
\lettrine[lines=2,nindent=.5em]{L}{et} $p$ be a prime.  A celebrated theorem of Sturm~\cite{Sturm-LNM1987} implies that an elliptic modular form with $p$-integral rational Fourier series coefficients is determined by its ``first few" Fourier series coefficients modulo $p$.  Sturm's theorem is an important tool in the theory of modular forms (for example, see~\cite{Ono, Stein} for some of its applications).  Poor and Yuen~\cite{P-Y-paramodular} (and later \cite{C-C-K-Acta2013} for $p\geq 5$) proved a Sturm theorem for Siegel modular forms of degree~$2$. Their work has been applied in different contexts, and for example, it allowed~\cite{C-C-R-Siegel, D-R-Ramanujan} to confirm Ramanujan-type congruences for specific Siegel modular forms of degree $2$.  In~\cite{RR-MRL}, we gave a characterization of $U(p)$ congruences of Siegel modular forms of arbitrary degree, but (lacking a Sturm theorem) we could only discuss one explicit example that occurred as a Duke-Imamo\u glu-Ikeda lift.  If a Siegel modular form does not arise as a lift, then one needs a Sturm theorem to justify its $U(p)$ congruences.

In this paper, we provide such a Sturm theorem for Siegel modular forms of degree~$g\geq 2$.  Our proof is totally different from the proofs of the cases $g=1,2$ in~\cite{Sturm-LNM1987, P-Y-paramodular, C-C-K-Acta2013}, which do not have visible extensions to the case~$g>2$.  More precisely, we perform an induction on the degree~$g$.   As in~\cite{Bru-WR-Fourier-Jacobi}, we employ Fourier-Jacobi expansions of Siegel modular forms, and we study vanishing orders of Jacobi forms.  However, in contrast to~\cite{Bru-WR-Fourier-Jacobi} we consider restrictions of Jacobi forms to torsion points (instead of their theta decompositions), which allow us to relate mod~$p$ diagonal vanishing orders (defined in Section~\ref{sec: preliminaries}) of Jacobi forms and Siegel modular forms.  We deduce the following theorem.

\begin{maintheorem}
\label{thm:maintheorem}
Let $F$ be a Siegel modular form of degree~$g\geq 2$, weight~$k$, and with $p$-integral rational Fourier series coefficients~$c(T)$.
Suppose that
\begin{gather*}
  c(T) \equiv 0 \pmod{p}
\quad
 \text{for all $T=(t_{i\!j})$ with}
\quad
  t_{i\!i} \le \Big(\frac{4}{3}\Big)^g \frac{k}{16}
\text{.}
\end{gather*}
Then $c(T) \equiv 0 \pmod{p}$ for all $T$.
\end{maintheorem}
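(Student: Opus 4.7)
My plan is to argue by induction on the degree $g$. The base case $g=1$ is Sturm's classical theorem, whose bound $k/12$ agrees with $(4/3)\cdot k/16$; assuming the statement in degree $g-1$ with bound $B_{g-1}:=(4/3)^{g-1}\,k/16$, I would prove it in degree $g$ with bound $B_g=(4/3)\,B_{g-1}$. Writing $\tau\in\bbH_g$ as $\bigl(\begin{smallmatrix}\tau'&z\\ z^T&\omega\end{smallmatrix}\bigr)$ and expanding
\[
F=\sum_{m\ge 0}\phi_m(\tau',z)\,e^{2\pi im\omega},
\]
each $\phi_m$ is a Jacobi form of degree $g-1$, weight $k$, and index $m$. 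The hypothesis of the theorem translates to: for every $m\le B_g$, the Fourier coefficients $c_m(T',r)$ of $\phi_m$ vanish mod $p$ whenever every diagonal entry of $T'$ is at most $B_g$.

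The core step is to promote this to $\phi_m\equiv 0\pmod p$ for all $m\le B_g$, using restrictions to torsion points rather than theta decompositions. For each such $m$ and each $z_0=(a\tau'+b)/N$ with $a,b\in\ZZ^{g-1}$ and $N\in\NN$, a standard quasi-periodicity correction turns $\phi_m(\tau',z_0)$ into a Siegel modular form of degree $g-1$ and weight $k$ on a congruence subgroup of $\Sp{g-1}(\ZZ)$ of level bounded in terms of $N$. Its Fourier coefficients are twisted sums $\sum_r c_m(T',r)\,e^{2\pi ir^Tb/N}$ indexed by $T'$ shifted by an explicit $a,m,N$-dependent term, so the diagonal vanishing of the $c_m(T',r)$ mod $p$ propagates. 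The factor-$4/3$ slack between $B_{g-1}$ and $B_g$ is chosen precisely to absorb the level-dependent growth of the Sturm bound for degree $g-1$, and the inductive hypothesis then forces $\phi_m(\tau',z_0)\equiv 0\pmod p$ for every such torsion point. A discrete Fourier inversion on the $(2m)^{g-1}$-torsion subgroup recovers each $c_m(T',r)\equiv 0\pmod p$, so $\phi_m\equiv 0\pmod p$.

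The remaining step is to upgrade $\phi_m\equiv 0\pmod p$ for $m\le B_g$ into $F\equiv 0\pmod p$. Here I would use the $\Sp{g}(\ZZ)$-invariance of the system $\{c(T)\}$ (constant up to sign on $\GL{g}(\ZZ)$-orbits of $T$) together with Minkowski's reduction theory. Any $T$ with $c(T)\not\equiv 0\pmod p$ has a $\GL{g}(\ZZ)$-representative whose largest diagonal entry is controlled by $(4/3)^{g-1}\det(T)^{1/g}$ via the recursive Hermite inequality $\gamma_{n+1}^2\le(4/3)\,\gamma_n^2$. Combined with the weight-slope bound relating $\det(T)$ to $k$ for a nonzero Fourier coefficient of a Siegel modular form of weight $k$, this yields a representative all of whose diagonal entries are at most $B_g$, and the core step then forces $c(T)\equiv 0\pmod p$.

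The main obstacle is calibrating the two sources of the factor $4/3$ --- the level absorption in the first step and the Hermite reduction in the second --- so that they align exactly with the inductive hypothesis. In particular, setting up the restriction-to-torsion-points argument so that the congruence subgroups that appear do not demand a larger Sturm bound than the slack accommodates, and interfacing the lattice-geometric reduction of $T$ with an analytic vanishing-order bound for Siegel modular forms, is the substantive new work beyond the reduction to degree $g-1$.
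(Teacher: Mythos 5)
Your skeleton matches the paper's: induction on the degree via Fourier--Jacobi expansions, restriction of Jacobi forms to torsion points, and a discrete Fourier inversion to recover coefficients. But the two places where you diverge are exactly where the proposal breaks. First, your calibration of the factor $\frac43$ is wrong. You restrict each $\phi_m$ to individual torsion points $z_0=(\alpha\tau'+\beta)/N$, obtaining scalar forms on congruence subgroups of level comparable to $N$, and you plan to let the $\frac43$ slack ``absorb the level-dependent growth of the Sturm bound''. This cannot work: the Fourier inversion forces $N>2m$ (to separate the finitely many reduced $R$ with $|r_i|\le m$), the index $m$ ranges up to about $(\frac43)^g\frac{k}{16}$ (and, in the step recovering $F$ from its Fourier--Jacobi coefficients, over all $m$), and a Sturm bound on a level-$N$ subgroup grows at least like the index of $\Gamma(N)$ in $\Sp{g-1}(\ZZ)$, i.e.\ polynomially in $N$ --- no fixed multiplicative slack absorbs that. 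Moreover, the genuine per-degree loss is not a level phenomenon at all: evaluating at such torsion points shifts $T$ to $T-\frac{1}{4m}R\rT R+\cdots$, which costs up to $\frac m4$ in diagonal vanishing order, and it is this $\frac m4$, together with the restriction $m\le k\slashdiv\rhop$, that produces $\rhop=\frac34\varrho^{(g-1)}_{\diag,\fp}$ in the paper. The paper sidesteps the level issue entirely by packaging all torsion values of denominator $N$ into a single vector-valued form $\phi[N]\in\rmM^{(g-1)}_k(\rho_{[N]})$ on the \emph{full} group, and proving (via the norm construction, in which weight and diagonal vanishing order scale by the same index $d$, so the slope is unchanged and independent of $N$ and of the representation) that vector-valued forms obey the same slope bound as scalar ones.

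Second, your concluding step is essentially circular. The ``weight-slope bound relating $\det(T)$ to $k$ for a nonzero Fourier coefficient'' that you want to combine with Hermite--Minkowski reduction is not available modulo $p$ in degree $g$ --- a mod-$p$ statement of that shape is, in substance, the theorem you are trying to prove. The paper needs no reduction theory here: for indices $m$ beyond the range already handled it runs a secondary induction on $m$; once $\phi_{m'}\equiv 0\pmod{p}$ for all $m'<m$, invariance of $c(T)$ under $\GL{g}(\ZZ)$ (permuting diagonal entries) shows the diagonal vanishing order of $\phi_m$ is at least (about) $m$, and the same Jacobi slope bound applies again to kill $\phi_m$; hence $F\equiv 0\pmod p$. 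A smaller but real point: perform the Fourier inversion at a denominator coprime to $p$ --- the paper takes a prime $N\neq p$ with $2m<N-2$, so the relevant matrix has determinant $(-1)^{N-1}N^{N-2}$, a unit mod $\fp$ --- whereas inversion on the $2m$-torsion, as you propose, can fail whenever $p$ divides $2m$.
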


If a Siegel modular form arises as a lift, then one can sometimes infer that it has integral Fourier series coefficients (see~\cite{P-R-Y-BullAust09}).  The situation is more complicated for Siegel modular forms that are not lifts.  However, if the ``first few diagonal" coefficients of a Siegel modular form are integral (or $p$-integral rational), then Theorem~\ref{thm:maintheorem} implies that all of its Fourier series coefficients are integral (or $p$-integral rational). 

\begin{maincorollary}
\label{cor:maincorollary}
Let $F$ be a Siegel modular form of degree~$g\geq 2$, weight~$k$, and with rational Fourier series coefficients $c(T)$. Suppose that
\begin{gather}
\label{eq:maincorollary-assumption}
  c(T) \in \ZZ
\quad
\text{for all $T=(t_{i\!j})$ with}
\quad
  t_{i\!i} \le \Big(\frac{4}{3}\Big)^g \frac{k}{16}
\text{.}
\end{gather}
Then $c(T) \in\ZZ$ for all $T$.
\end{maincorollary}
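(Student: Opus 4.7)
The plan is to reduce Corollary~\ref{cor:maincorollary} to Theorem~\ref{thm:maintheorem} via a contradiction argument that works locally at a single prime. Suppose there exists an index $T_1$ with $c(T_1) \in \QQ \setminus \ZZ$; then some prime~$p$ satisfies $v_p(c(T_1)) < 0$, where $v_p$ denotes the $p$-adic valuation on~$\QQ$. I will then apply Theorem~\ref{thm:maintheorem} to a suitable rescaling of~$F$ by a power of~$p$ and deduce a contradiction from the resulting universal mod-$p$ vanishing of Fourier coefficients.

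The essential technical input is that the $p$-adic denominators of the $c(T)$ are uniformly bounded, so that $n := \max_T\bigl(-v_p(c(T))\bigr)$ is a well-defined nonnegative integer. I would invoke this as standard: the space $M_k(\Gamma_g, \QQ)$ of rational Siegel modular forms of weight~$k$ is finite-dimensional and admits an integral structure $M_k(\Gamma_g, \QQ) = M_k(\Gamma_g, \ZZ) \otimes_\ZZ \QQ$, so any such~$F$ takes the form $N^{-1} G$ for some positive integer~$N$ and some $G$ with integral Fourier expansion. Under the contradiction hypothesis one has $n \geq 1$.

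It remains to apply Theorem~\ref{thm:maintheorem} to the rescaled form $p^n F$, whose Fourier coefficients $p^n c(T)$ are $p$-integral rational by construction. The hypothesis~\eqref{eq:maincorollary-assumption} gives $c(T) \in \ZZ$ for every $T = (t_{i\!j})$ with $t_{i\!i} \leq (4/3)^g\, k/16$, and for such~$T$ the coefficient $p^n c(T)$ lies in $p^n \ZZ \subseteq p\ZZ$ and hence vanishes modulo~$p$. Theorem~\ref{thm:maintheorem} then forces $p^n c(T) \equiv 0 \pmod{p}$ for \emph{every}~$T$, i.e.\ $v_p(c(T)) \geq 1 - n$ uniformly in~$T$. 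This contradicts the definition of~$n$, which produces some $T_0$ with $v_p(c(T_0)) = -n$.

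The only step that is not purely formal is the uniform boundedness of $p$-denominators, which I expect to be invoked as a classical consequence of the rational structure of Siegel modular forms (for instance via a $q$-expansion principle); once that is granted, the corollary follows by the entirely mechanical scaling argument outlined above.
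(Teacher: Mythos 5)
Your argument is correct and is essentially the paper's own proof: both rest on the bounded-denominator property of rational Siegel modular forms (the paper cites Chai--Faltings for this) followed by an application of Theorem~\ref{thm:maintheorem} to a rescaled form at a prime occurring in the denominator. The only cosmetic difference is that you clear denominators prime-by-prime via $p^n F$ and contradict the maximal exponent $n$, whereas the paper scales by the minimal common denominator $l$ and contradicts its minimality at a prime $q \isdiv l$.
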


\begin{mainremarksenumerate}
\item
Theorem~\ref{thm:maintheorem} and Corollary~\ref{cor:maincorollary} are effective for explicit calculations with Siegel modular forms, since only finitely many $T$ satisfy the condition $t_{i\!i} \le (\frac{4}{3})^g \frac{k}{16}$ for all~$i$.

\item
If $p\geq 5$, then Theorem~\ref{thm:slope-bounds} shows that the bounds $(\frac{4}{3})^g \frac{k}{16}$ in Theorem~\ref{thm:maintheorem}  and in Corollary~\ref{cor:maincorollary} can be replaced by the slightly better bounds $(\frac{4}{3})^g \frac{9k}{160}$. 

\item
If \eqref{eq:maincorollary-assumption} in~Corollary~\ref{cor:maincorollary} is replaced by the assumption that $c(T)$ is $p$-integral rational for all $T=(t_{i\!j})$ with $ t_{i\!i} \le (\frac{4}{3})^g \frac{k}{16}$, then considering the case $q=p$ in the proof of Corollary~\ref{cor:maincorollary} yields that $c(T)$ is $p$-integral rational for all $T$.

\item
One can remove the assumption that $c(T) \in \QQ$ in~Corollary~\ref{cor:maincorollary}.
More precisely, if $F$ is a Siegel modular form of degree~$g\geq 2$, weight~$k$, and with Fourier series coefficients $c(T)\in\CC$ such that \eqref{eq:maincorollary-assumption} holds, then results of~\cite{Chai-Fal} show that $F$ is a linear combination of Siegel modular forms of degree~$g\geq 2$, weight~$k$, and with rational Fourier series coefficients, and applying Corollary~\ref{cor:maincorollary} yields that $c(T) \in \ZZ$ for all $T$. 
\end{mainremarksenumerate}

The paper is organized as follows.  In Section~\ref{sec: preliminaries}, we give some background on Jacobi forms and Siegel modular forms.  In Section~\ref{sec: Vanishing orders of Jacobi forms}, we explore diagonal vanishing orders of Jacobi forms and of their specializations to torsion points. In Section~\ref{sec: Slope bounds for Siegel modular forms}, we inductively establish diagonal slope bounds for Siegel modular forms of arbitrary degree, and we prove Theorem~\ref{thm:maintheorem} and Corollary~\ref{cor:maincorollary}.

\vspace{1ex}


\section{Preliminaries}
\label{sec: preliminaries}

Throughout, $g, k, m \geq 1$ are integers, and $p$ is a rational prime.  We work over the maximal unramified extension $\Qpur$ of $\QQ_p$.  Note that $\Qpur$ contains all $N$\thdash\ roots of unity if $N$ and $p$ are relatively prime. We always write $\fp$ to denote a prime ideal in $\Qpur$, and $\Op$ stands for the localization of $\Qpur$ at~$\fp$. Moreover, we refer to the elements of the local ring $\ZZ_p\cap\QQ$ as $p$-integral rational numbers.

Finally, let $\HS_g$ be the Siegel upper half space of degree~$g$, $\Sp{g}(\ZZ)$ be the symplectic group of degree~$g$ over the integers, and $\rho$ be a representation of $\Sp{g}(\ZZ)$  with representation space $V(\rho)$, and such that $\big[ \ker\rho : \Sp{g}(\ZZ) \big]<\infty$.

\subsection{Siegel modular forms}
\label{sec: Siegel modular forms}

Let $\rmM^{(g)}_k (\rho)$ denote the vector space of Siegel modular forms of degree~$g$, weight~$k$, type $\rho$, and with coefficients in~$\Op$ (see \cite{Sh-Acta78}). If $\rho$ is trivial, then we simply write $\rmM^{(g)}_k$. Recall that an element $F\in\rmM^{(g)}_k(\rho)$ is a holomorphic function $F:\HS_g\ra V(\rho)$ with transformation law
\begin{gather*}
  F\big( (A Z + B) (C Z + D)^{-1} \big)
=
  \rho(M)\,\det(C Z + D)^{k}\, F(Z)
\end{gather*}
for all $M=\left(\begin{smallmatrix} A & B \\ C & D \end{smallmatrix}\right) \in \Sp{g}(\ZZ)$.  Furthermore, $F$ has a Fourier series expansion of the form
\begin{gather*}
 F(Z)= \sum_{T=\rT{T}\geq 0} \hspace{-.5em}
  c(T)\, e^{2\pi i\, \tr(T Z) }
\text{,}
\end{gather*}
where $\tr$ denotes the trace, $\rT T$ is the transpose of $T$, and where the sum is over symmetric, positive semi-definite, and rational $g\times g$ matrices $T$.

If $F\in\rmM^{(g)}_k (\rho)$ such that $F\not\equiv 0\pmod{\,\fp}$, i.e., if there exists a Fourier series coefficient $c(T)$ of $F$ such that $c(T)\not\equiv 0\pmod{\,\fp}$, then the mod~$\fp$ diagonal vanishing order of $F$ is defined by
\begin{gather}
\label{eq:def:Siegel-diagonal-vanishing-order}
  \ord_{\fp}\,F
:=
  \max \big\{0 \le l \in \ZZ \,:\, \forall T=(t_{i\!j}), t_{i\!i} \le l\;\tx{ for all } 1 \le i \le g
             \,:\, c(T) \equiv 0 \pmod{\,\fp} \big\}
\tx{.}
\end{gather}
If $F$ has $p$-integral rational coefficients such that $F\not\equiv 0\pmod{p}$, then $\ord_{p}\,F$ is defined likewise.  Finally, the mod~$\fp$ diagonal slope bound for degree~$g$ (scalar-valued) Siegel modular forms is given by
\begin{gather}
\label{eq:def:diagonal-slope-bound}
  \rho_{\diag,\,\fp}^{(g)}
:=
  \inf_{k}
  \inf_{\substack{F \in \rmM^{(g)}_{k} \\ F \not\equiv 0 \pmod{\,\fp}}}\,
  \frac{k}{\ord_{\fp}\,F}
\tx{,}
\end{gather}
and the definition of the mod~$p$ diagonal slope bound  $\rho_{\diag, p}^{(g)}$ for degree~$g$ (scalar-valued) Siegel modular forms with $p$-integral rational coefficients is completely analogous.

\subsection{Jacobi forms}
\label{sec: Jacobi forms}

Ziegler~\cite{Zi} introduced Jacobi forms of higher degree (extending~\cite{EZ}). Let $\rmJ_{k, m}^{(g)}(\rho)$ denote the ring of Jacobi forms of degree~$g$, weight~$k$, index~$m$, type $\rho$, and with coefficients in~$\Op$.  If $\rho$ is trivial, then we suppress it from the notation.  Recall that Jacobi forms occur as Fourier-Jacobi coefficients of Siegel modular forms: Let $F\in\rmM^{(g+1)}_k (\rho)$, and write $Z =\left(\begin{smallmatrix} \tau & \rT z\\ z & \tau' \end{smallmatrix}\right)\in\HS_{g+1}$, 
where $\tau \in \HS_{g}$, $z \in \CC^{g}$ is a row vector, and $\tau'\in \HS_{1}$ to find the Fourier-Jacobi expansion:
\begin{gather*}
  F(Z)
=
  F(\tau,z,\tau')
=
  \sum_{0 \le m \in \ZZ}
  \phi_m(\tau,z)\, e^{2\pi i m \tau'}
\text{,}
\end{gather*}
where $\phi_m\in\rmJ_{k, m}^{(g)}(\rho)$. We now briefly recollect some defining properties of such Jacobi forms.

Let $G^\rmJ:=\Sp{g}(\RR)\ltimes (\RR^{2g}\tilde{\times}\RR)$ be the real Jacobi group of degree $g$ (see~\cite{Zi}) with group law
\begin{gather*}
[M,(\lambda,\mu),\kappa]\cdot[M',(\lambda',\mu'),\kappa']:=[MM',(\tilde{\lambda}+\lambda', \tilde{\mu}+\mu'),\kappa+\kappa'+\tilde{\lambda} \rT{\mu'}-\tilde{\mu}\rT{\lambda'}]
\text{,}
\end{gather*}
where $(\tilde{\lambda},\tilde{\mu}):=(\lambda, \mu)M'$. For fixed $k$ and $m$, define the following slash operator on functions $\phi:\HS_g \times\CC^g\rightarrow V(\rho)$\,:
\begin{align}
\label{Jacobi-slash}
&
  \Big(\phi\,\big|_{k,m}
  \left[\left(\begin{smallmatrix}A & B\\C & D\end{smallmatrix}\right),\,
  (\lambda, \mu),x\right]
  \Big)(\tau,z)
\;:=\;
  \rho^{-1} \left(\begin{smallmatrix}A & B\\C & D\end{smallmatrix}\right)\,
  \det(C\tau+D)^{-k}
\\\nonumber
&
  \quad\cdot\,
  \exp\Big( 2\pi im\big(
  - (C\tau+D)^{-1} (z+\lambda\tau+\mu) \,C\, \rT(z+\lambda\tau+\mu)\;
  + \lambda \tau \rT\lambda
  + 2\lambda\rT z
  +\mu\rT\lambda
  +x \big) \Big)
\\\nonumber
&
  \quad\cdot\,
  \phi\big( (A\tau+B)(C\tau+D)^{-1},\, (z+\lambda\tau+\mu)(C\tau+D)^{-1} \big)
\end{align}
for all $\left[\left(\begin{smallmatrix}A & B\\C & D\end{smallmatrix}\right), (\lambda, \mu),x\right]\in G^\rmJ$.  A Jacobi form of degree~$g$, weight~$k$, and index~$m$ is invariant under \eqref{Jacobi-slash} when restricted to $\left(\begin{smallmatrix}A & B\\C & D\end{smallmatrix}\right)\in\Sp{g}(\ZZ)$, $(\lambda, \mu)\in\ZZ^{2g}$, and $\kappa=0$.  Moreover, every $\phi\in\rmJ_{k, m}^{(g)}(\rho)$ has a Fourier series expansion of the form
\begin{gather*}
  \phi(\tau,z)
=
  \sum_{T,R}
  c(T,R)\, e^{2\pi i\, \tr(T \tau + zR)}
	\text{,}
\end{gather*}
where the sum is over symmetric, positive semi-definite, and rational $g\times g$ matrices~$T$ and over column vectors $R\in\QQ^g$ such that $4mT - R \rT R$ is positive semi-definite.

Finally, we state the analog of~\eqref{eq:def:Siegel-diagonal-vanishing-order} for Jacobi forms.  Let $\phi\in\rmJ_{k, m}^{(g)}(\rho)$ such that $\phi\not\equiv 0\pmod{\,\fp}$, i.e., there exists a Fourier series coefficient $c(T,R)$ of $\phi$ such that $c(T,R)\not\equiv 0\pmod{\,\fp}$.  Then the mod~$\fp$ diagonal vanishing order of $\phi$ is defined by
\begin{gather}
\label{eq:def:Jacobidiagonal-vanishing-order}
  \ord_{\fp}\,\phi
:=
  \max \big\{0 \le l \in \ZZ \,:\, \forall R, T=(t_{i\!\!j}), t_{i\!i} \le l\;\tx{ for all } 1 \le i \le g
             \,:\, c(T,R) \equiv 0 \pmod{\,\fp} \big\}
\tx{,}
\end{gather}
and if $\phi$ has $p$-integral rational coefficients such that $\phi\not\equiv 0\pmod{p}$, then one defines $\ord_{p}\,\phi$ in the same way.

\vspace{1ex}

\section{Vanishing orders of Jacobi forms}
\label{sec: Vanishing orders of Jacobi forms}

In this section, we discuss diagonal vanishing orders of Jacobi forms and of their evaluations at torsion points.

Throughout, $N$ is a positive integer that is not divisible by $p$.  Consider the $\CC$ vector space
\begin{gather}
  V\big( \rho_{[N]} \big)
:=
  \CC\Big[\big(\tfrac{1}{N} \ZZ^g \slashdiv N \ZZ^g \big)^2 \Big]
=
  \lspan_\CC\big\{ \frake_{\alpha, \beta} \,:\,
                   \alpha, \beta \in \tfrac{1}{N} \ZZ^g / N \ZZ^g 
  \big\}
\tx{,}
\end{gather}
and the representation~$\rho_{[N]}$ on $V\big( \rho_{[N]} \big)$, which is defined by the action of $\Sp{g}(\ZZ)$ on $(\tfrac{1}{N} \ZZ^g / N \ZZ^g)^2$:
\begin{gather}
  \rho_{[N]}\big( M^{-1} \big)\, \frake_{\alpha, \beta}
:=
  \frake_{\alpha', \beta'}
\text{,}
\qquad
  \text{where}\;\;
  \big( \alpha',\beta' \big)
:=
  \big( \alpha,\beta \big)\, M
\quad
\text{for $M\in\Sp{g}(\ZZ)$}
\tx{.}
\end{gather}
If $\phi \in \rmJ^{(g)}_{k,m}$, then $\phi[N]$ is its restriction to torsion points of denominator at most~$N$, i.e., 
\begin{align}
\label{eq:def:jacobi-forms-restriction-to-torsion-points}
\nonumber
  \phi[N]
&:\,
  \HS^{(g)} \lra V\big( \rho_{[N]} \big)
\\
  \phi[N](\tau)
&:=
  \Big(
  \big( \phi \big|_{k, m} [ I_g, (\alpha, \beta), 0 ] \big) (\tau, 0)
  \Big)_{\alpha, \beta \in \frac{1}{N}\ZZ^g / N \ZZ^g }
\tx{,}
\end{align}
where $I_g$ stands for the $g\times g$ identity matrix.  It is easy to see that $\phi[N]$ is a vector-valued Siegel modular form (see also Theorem ~1.3~of~\cite{EZ} and Theorem~1.5 of~\cite{Zi}):

\begin{lemma}
Let $\phi \in \rmJ^{(g)}_{k,m}$. Then $\phi[N] \in \rmM^{(g)}_{k}(\rho_{[N]})$.
\end{lemma}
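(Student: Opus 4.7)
The plan is to verify the three defining conditions of a (vector-valued) Siegel modular form for $\phi[N]$: well-definedness of the components as functions on $(\tfrac{1}{N}\ZZ^g / N\ZZ^g)^2$, the transformation law under $\Sp{g}(\ZZ)$ with multiplier $\rho_{[N]}$, and the correct boundary behaviour encoded in the Fourier expansion.

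First I would set $f_{\alpha,\beta}(\tau) := \bigl(\phi |_{k,m}[I_g,(\alpha,\beta),0]\bigr)(\tau,0)$ so that $\phi[N](\tau) = \sum_{\alpha,\beta} f_{\alpha,\beta}(\tau)\,\frake_{\alpha,\beta}$. To check that $f_{\alpha,\beta}$ only depends on the class of $(\alpha,\beta)$ in $(\tfrac{1}{N}\ZZ^g / N\ZZ^g)^2$, I would replace $(\alpha,\beta)$ by $(\alpha+N\gamma,\beta+N\delta)$ with $\gamma,\delta\in\ZZ^g$ and use the group law of $G^\rmJ$ to write
\[
[I_g,(\alpha+N\gamma,\beta+N\delta),0] = [I_g,(N\gamma,N\delta),0]\cdot[I_g,(\alpha,\beta),0]\cdot[I_g,(0,0),\kappa]
\]
for an explicit $\kappa\in\ZZ$. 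Since $N\gamma,N\delta\in\ZZ^g$, the Jacobi invariance $\phi|_{k,m}[I_g,(N\gamma,N\delta),0]=\phi$ applies, and the central element contributes the factor $e^{2\pi i m N(\delta\rT\alpha - \gamma\rT\beta)}$; because $N\alpha, N\beta\in\ZZ^g$ this factor equals~$1$.

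The heart of the argument is the transformation law. Fix $M=\bigl(\begin{smallmatrix}A&B\\C&D\end{smallmatrix}\bigr)\in\Sp{g}(\ZZ)$ and $(\alpha',\beta')\in(\tfrac{1}{N}\ZZ^g/N\ZZ^g)^2$. The key identity is the factorization in~$G^\rmJ$
\[
[M,0,0]\cdot[I_g,(\alpha',\beta'),0] = [I_g,(\alpha,\beta),0]\cdot[M,0,0],
\qquad (\alpha,\beta):=(\alpha',\beta')M,
\]
which follows directly from the group law with $(\tilde\lambda,\tilde\mu)=(\alpha',\beta')\cdot I$ on one side and $(0,0)\cdot M$ on the other. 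Applying $|_{k,m}$ to both sides, using $\phi|_{k,m}[M,0,0]=\phi$, and then specialising the resulting identity at $(\tau,0)$ via the explicit formula~\eqref{Jacobi-slash} (where the mixed $zC\rT z$, $\lambda\rT z$ terms vanish at $z=0$), I obtain
\[
f_{\alpha',\beta'}\!\bigl((A\tau+B)(C\tau+D)^{-1}\bigr) = \det(C\tau+D)^k\, f_{(\alpha',\beta')M}(\tau)
\tx{.}
\]
Summing over $(\alpha',\beta')$ and comparing with the definition $\rho_{[N]}(M)\frake_{(\alpha',\beta')M}=\frake_{\alpha',\beta'}$ yields precisely $\phi[N](M\langle\tau\rangle) = \det(C\tau+D)^k\,\rho_{[N]}(M)\,\phi[N](\tau)$.

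Finally, for holomorphy and the Fourier expansion I would substitute the Fourier expansion of $\phi$ into the explicit formula $f_{\alpha,\beta}(\tau) = e^{2\pi i m(\alpha\tau\rT\alpha + \beta\rT\alpha)}\,\phi(\tau,\alpha\tau+\beta)$ and cyclicity of the trace to see that each $f_{\alpha,\beta}$ has an expansion in $e^{2\pi i \tr(T'\tau)}$ whose exponents satisfy $T' = \tfrac{1}{4m}(4mT - R\rT R) + \tfrac{1}{4m}(R+2m\rT\alpha)(\rT R+2m\alpha) \ge 0$. This shows the appropriate support condition, from which the Koecher principle (for $g\ge 2$, or direct boundedness arguments for $g=1$) ensures the result is a genuine Siegel modular form.

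The main obstacle is the bookkeeping around slash actions: being careful with the non-commutativity of $G^\rmJ$ and with the fact that $\rho_{[N]}$ acts contragrediently on the indices $(\alpha,\beta)$. Once the factorisation identity above is isolated, everything else is essentially a direct calculation.
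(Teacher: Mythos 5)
Your proposal is correct and follows essentially the same route as the paper: well-definedness via the group law of $G^\rmJ$ with the integral central term killed by $e^{2\pi i m\kappa}$, and the transformation law by commuting $[I_g,(\alpha,\beta),0]$ past $[M,(0,0),0]$ and using Jacobi invariance (your extra check of the Fourier support/Koecher condition is standard and harmless). One small bookkeeping slip: with your labelling, the displayed identity $[M,0,0]\cdot[I_g,(\alpha',\beta'),0]=[I_g,(\alpha,\beta),0]\cdot[M,0,0]$ holds for $(\alpha',\beta')=(\alpha,\beta)M$, i.e.\ $(\alpha,\beta)=(\alpha',\beta')M^{-1}$, not $(\alpha,\beta)=(\alpha',\beta')M$ as written; the final transformation law $f_{\alpha',\beta'}\big|_k M = f_{(\alpha',\beta')M}$ that you state is nevertheless the correct one (and matches the paper), so only the intermediate relation needs the inverse fixed.
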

\begin{proof}
We first argue that $\phi[N]$ is well-defined: If $a, b \in \ZZ^g$, then 
\begin{gather*}
  \phi \big|_{k, m} [ I_g, (\alpha + N a, \beta + N b ), 0 ]
=
  \phi
  \big|_{k, m} [ I_g, (N a, N b), N \alpha \rT b - N \beta \rT a]
  \big|_{k, m} [ I_g, (\alpha, \beta), 0 ]
\text{.}
\end{gather*}
Note that $\kappa := N \alpha \rT{b} - N \beta \rT{a} \in \ZZ$ does not contribute to the action, and we find that the defining expression for $\phi[N]$ is independent of the choice of representatives of $\alpha, \beta \in \frac{1}{N} \ZZ^g \slashdiv N \ZZ^g$.

Next we verify the behavior under modular transformation of~$\phi[N]$. Let $M \in \Sp{g}(\ZZ)$.  Then
\begin{gather*}
  [ I_g, (\alpha, \beta ), 0 ] \cdot [M, (0,0), 0)]
=
  [M, (0,0), 0]\cdot [ I_g, (\alpha', \beta'), 0]
\end{gather*}
with $\big(\alpha', \beta' \big)=\big(\alpha, \beta \big)\, M$, which implies that
\begin{align*}
  \big( \phi[N]_{\alpha, \beta} \big) \big|_k\, M
& =
  \big( \phi \big|_{k, m} [ I_g, (\alpha, \beta), 0 ] \big) (\,\cdot\,, 0) \big)
  \big|_k\, M
=
  \big( \phi \big|_{k, m} [M, (0,0), 0] \cdot [ I_g, (\alpha', \beta'), 0] \big) (\,\cdot\,, 0)\\
& =
  \big( \phi[N]_{\alpha', \beta'} \big)
\text{.}
\end{align*}
\end{proof}

The next lemma relates the mod~$\,\fp$ diagonal vanishing orders of a Jacobi form $\phi$ and its specialization $\phi[N]$.

\begin{lemma}
\label{la:vanishing-order-of-jaocbi-specializations}
Let $\phi \in \rmJ^{(g)}_{k,m}$.  Then $\ord_{\fp}\,\phi[N]\geq \ord_{\fp}\,\phi - \frac{m}{4}$.
\end{lemma}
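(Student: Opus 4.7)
The strategy is to expand $\phi[N]$ as a Fourier series and to apply the Jacobi periodicity of $\phi$ in order to control the diagonal entries of the Fourier indices of~$\phi$ that contribute to each Fourier coefficient of~$\phi[N]$.

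A direct calculation with the slash-operator formula~\eqref{Jacobi-slash}, specialized to $M = I_g$, $(\lambda,\mu) = (\alpha,\beta)$, and $x = 0$, shows that the $(\alpha, \beta)$-component of~$\phi[N]$ has Fourier expansion
\begin{gather*}
  \phi[N]_{\alpha, \beta}(\tau)
  \;=\;
  e^{2\pi i m \beta\rT\alpha}
  \sum_{(T, R)} c(T, R)\, e^{2\pi i \beta R}\, e^{2\pi i \tr(T'\tau)}
\text{,}
\end{gather*}
where $T' = T + m\rT\alpha\alpha + \tfrac{1}{2}(R\alpha + \rT\alpha\rT R)$. Hence the Fourier coefficient of $\phi[N]_{\alpha,\beta}$ at the index~$T'$ is a phase-weighted sum of coefficients $c(T, R)$ of~$\phi$, taken over those $(T, R)$ mapped to~$T'$ by the affine shift above. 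I plan to prove the bound by showing that each individual $c(T, R)$ in this sum is $\equiv 0 \pmod\fp$ under the hypothesis $\max_i T'_{ii} \le \ord_\fp\,\phi - m/4$; no cancellation between summands will be needed.

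Two ingredients feed into this. The first is the identity
\begin{gather*}
  4m\,T'_{ii} - (2m\alpha_i + R_i)^2
  \;=\;
  4m\,T_{ii} - R_i^2
  \;=:\; D_{ii}
\text{,}
\end{gather*}
which is obtained by expanding both sides. Since $D_{ii}\geq 0$ by the positive semi-definiteness of $4mT - R\rT R$, one gets $0 \le D_{ii} \le 4m\,T'_{ii}$. The second is Jacobi periodicity: comparing Fourier series on both sides of $\phi\big|_{k,m}[I_g, (\lambda, 0), 0] = \phi$ for $\lambda \in \ZZ^g$ yields $c(T, R) = c(T^\ast, R^\ast)$, where $T^\ast = T + m\rT\lambda\lambda + \tfrac{1}{2}(R\lambda + \rT\lambda\rT R)$ and $R^\ast = R + 2m\rT\lambda$. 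Since $T^\ast_{ii} = T_{ii} + m\lambda_i^2 + \lambda_i R_i$ depends only on~$\lambda_i$, each $\lambda_i \in \ZZ$ can be chosen independently as the nearest integer to $-R_i/(2m)$; completing the square then gives $T^\ast_{ii} \le D_{ii}/(4m) + m/4$.

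Combining these two estimates yields $T^\ast_{ii} \le T'_{ii} + m/4$ for every~$i$. Therefore, under the hypothesis $\max_i T'_{ii} \le \ord_\fp\,\phi - m/4$, we have $\max_i T^\ast_{ii} \le \ord_\fp\,\phi$, which forces $c(T^\ast, R^\ast) \equiv 0 \pmod\fp$ and hence $c(T, R) \equiv 0 \pmod\fp$ for every $(T, R)$ that contributes to the Fourier coefficient of~$\phi[N]$ at~$T'$, proving the claim. The conceptual obstacle is that the affine shift from~$T$ to~$T'$ introduces the cross term $\alpha_i R_i$, which can be of size $|\alpha_i|\sqrt{m T_{ii}}$ and thus cannot be controlled directly in terms of~$T'_{ii}$ alone; passing to a Jacobi-reduced representative with $|R^\ast_i| \le m$ is the crucial step that produces the clean constant~$m/4$.
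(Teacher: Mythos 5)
Your proposal is correct and takes essentially the same route as the paper's proof: both expand the components $\phi[N]_{\alpha,\beta}$ in a Fourier series, note that the diagonal of the discriminant $4mT - R\,\rT R$ is bounded by $4m$ times the diagonal of the specialized Fourier index, and use invariance under $\big|_{k,m}[I_g,(\lambda,0),0]$ to pass to a representative with $|r_i|\le m$ (your choice of $\lambda_i$ nearest to $-R_i/2m$ is exactly this reduction), which costs at most $m/4$ per diagonal entry. The only difference is cosmetic: the paper completes the square in matrix form, while you work entry-by-entry on the diagonal.
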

\begin{proof}
Let $\phi(\tau,z)=\sum_{T,R} c(T,R)\, e^{2\pi i\, (\tr(T \tau)+zR)}$.  Then $\phi[N](\tau)$ equals
\begin{gather}
\label{eq:jacobi-forms-restriction-to-torsion-points:fourier-coefficients}
\begin{aligned}
  \big( \phi\big|_k [I_g, (\alpha, \beta), 0] \big) (\tau, 0)
&=
  e^{2\pi im (\alpha \tau \rT\alpha +\beta\rT\alpha)}\,
  \sum_{T,R}
  c(T,R)\, e^{2\pi i\, \big(\tr(T \tau)+ (\alpha\tau + \beta)R\big)}
\\
&=
  e^{2\pi im\,\beta \rT\alpha}\,
  \sum_{T,R}
  c(T,R) e^{2\pi i\,\beta R}\,
  e^{2\pi i\,\tr\Big(
  \big(T - \frac{1}{4m}R\rT R
       \,+\,
       \frac{1}{m}\rT\big(m \alpha +\frac{1}{2}\rT R\big)\,\big(m \alpha + \frac{1}{2}\rT R\big) 
  \big) \tau \Big)}
\tx{.}
\end{aligned}
\end{gather}
Observe that $c(T,R) e^{2\pi i\,\beta (\rT \alpha +R)}\in\Op$.  It suffices to show that $c(T,R)$ vanishes mod~$\,\fp$ if the diagonal entries $t'_{i\!i}$ of $T':= T - \frac{1}{4m}R\rT R$ are less than $\ord_{\fp}\,\phi- \frac{m}{4}$.

Consider $T, R$ such that $t'_{i\!i} \le \ord_\frakp\,\phi - \frac{m}{4}$ for some fixed~$i$. Note that $c(T, R)$ remains unchanged when replacing $T\mapsto T + \frac{1}{2}(R \lambda+\rT\lambda\rT R) +m \rT \lambda \lambda$ and $R\mapsto R + 2m \rT \lambda$, which corresponds to the invariance of $\phi$ under~$\big|_{k,m}\, [I_g, (\lambda,0) ,0]$. Hence we only have to consider the case of $R = \rT(r_1, \ldots, r_g)$ with $-m \le r_i \le m$.  In this case, $t'_{i\!i} = t_{i\!i} - \frac{1}{4m}r_i^2 \le \ord_{\fp}\,\phi- \frac{m}{4}$ implies that $t_{i\!i} \le \ord_{\fp}\,\phi$, i.e., $c(T, R) \equiv 0 \pmod{\,\fp}$.
\end{proof}

The following lemma associates the mod~$\,\fp$ diagonal vanishing orders of scalar-valued and vector-valued Siegel modular forms.

\begin{lemma}
\label{la:slope-bound-vector-valued}
Suppose that there exists a mod~$\,\fp$ diagonal slope bound $\rhop$ for degree~$g \ge 1$.  Let $\rho$ be a representation of~$\Sp{g}(\ZZ)$ defined over~$\Op$, and assume that its dual $\rho^\ast$ is also defined over~$\Op$.  If $F \in \rmM^{(g)}_k(\rho)$ such that $\ord_{\fp}\,F>k \big\slash \rhop$, then $F \equiv 0 \pmod{\,\fp}$.
\end{lemma}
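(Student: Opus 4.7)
The plan is to reduce the vector-valued statement to the scalar diagonal slope bound by forming a ``norm'' over an $\Sp{g}(\ZZ)$-orbit in $V(\rho^*)_\Op$ and applying the scalar slope bound to the resulting scalar form. Suppose for contradiction that $F\not\equiv 0\pmod{\fp}$ and set $L:=\ord_\fp F$, so by hypothesis $L>k/\rhop$.

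Since $\rho$ and $\rho^*$ are both defined over $\Op$ and factor through the finite quotient $\Sp{g}(\ZZ)/\ker\rho$, I would first decompose $V(\rho)_\Op$ into $\Sp{g}(\ZZ)$-stable summands over $\Op$ and treat each isotypic piece in turn, reducing to the case that $\rho$ (and hence $\rho^*$) is irreducible. I would then pick a nonzero $v^*\in V(\rho^*)_\Op$ and let $\Omega=\{v_1^*,\dots,v_s^*\}\subset V(\rho^*)_\Op$ denote its finite $\Sp{g}(\ZZ)$-orbit under $\rho^*$. Form
\[
G(Z)\;:=\;\prod_{l=1}^{s}v_l^*\bigl(F(Z)\bigr).
\]
Because $\Sp{g}(\ZZ)$ permutes $\Omega$ through $\rho^*$, the product $G$ is invariant under the weight-$ks$ slash action, so $G\in\rmM^{(g)}_{ks}$ with $\Op$-coefficients.

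The argument then splits in two. If $G\equiv 0\pmod{\fp}$, then since the formal Fourier-expansion ring over the residue field $\Op/\fp$---indexed by the cancellative monoid of positive semi-definite symmetric half-integer $g\times g$ matrices---is a domain, some factor $v_l^*(F)\equiv 0\pmod{\fp}$ vanishes identically. By the $\Sp{g}(\ZZ)$-equivariance of $\Omega$ this forces $w^*(F)\equiv 0\pmod{\fp}$ for every $w^*$ in the orbit of $v_l^*$; and because irreducibility of $\rho^*$ implies that this orbit $\Op$-spans $V(\rho^*)_\Op$, we conclude $F\equiv 0\pmod{\fp}$, contradicting our assumption. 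If instead $G\not\equiv 0\pmod{\fp}$, the scalar slope bound yields $\ord_\fp G\le ks/\rhop$, while a Fourier-coefficient analysis of
\[
c_G(T)\;=\;\sum_{T_1+\cdots+T_s=T}\,\prod_{l=1}^{s}v_l^*\bigl(c_F(T_l)\bigr)
\]
shows that a nonvanishing term requires each $c_F(T_l)\not\equiv 0\pmod{\fp}$, hence each $T_l$ to have at least one diagonal entry exceeding $L$. Translating this constraint on the summands into a lower bound on $\ord_\fp G$ and combining with the scalar slope bound then contradicts $L>k/\rhop$.

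The hard part will be sharpening the combinatorial step in the second case: a naive pigeonhole distributing the $s$ forced large diagonal positions among the $g$ coordinates of $T$ only yields $\ord_\fp G\ge sL/g$, whereas to beat the hypothesis $L>k/\rhop$ one needs $\ord_\fp G\ge sL$. Closing this gap is where I expect to invoke the $\GL{g}(\ZZ)$-equivariance of Fourier coefficients, $c_F(A^\rT T A)=(\det A)^{-k}\rho(M_A)^{-1}c_F(T)$ for $A\in\GL{g}(\ZZ)$, which lets one replace each summand $T_l$ by a $\GL{g}(\ZZ)$-equivalent matrix without disturbing the nonvanishing condition mod $\fp$, and thereby align the ``large'' diagonals of all summands into a single coordinate of $T$.
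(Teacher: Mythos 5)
Your core construction---evaluating $F$ against linear forms and multiplying the evaluations over a finite $\Sp{g}(\ZZ)$-orbit to obtain a scalar form of weight $ks$ with $\Op$-coefficients, then invoking the scalar slope bound---is the same device as in the paper's proof (there the product runs over $\ker\rho\backslash\Sp{g}(\ZZ)$ for an arbitrary $v\in V(\rho)^\ast(\Op)$). But your argument is not complete: the decisive inequality, that $G$ has $\ord_\fp G>sk\big\slash\rhop$ (equivalently, roughly $\ord_\fp G\ge s\,\ord_\fp F$), is exactly the step you defer, and the repair you sketch cannot supply it. The order in \eqref{eq:def:Siegel-diagonal-vanishing-order} is a ``max of the diagonal'' condition, and for $g\ge 2$ it is genuinely not superadditive at the level of mod~$\fp$ supports: with $L=\ord_\fp F$, one factor may have a nonvanishing coefficient at a matrix of the shape $\diag(L+1,0,\dots,0)$ and another factor at $\diag(0,L+1,0,\dots,0)$ (both are permitted when each factor merely has order $>L$), and their sum $\diag(L+1,L+1,0,\dots,0)$ has all diagonal entries at most $2L$ once $L\ge 1$; so no term-by-term analysis of $c_G(T)=\sum\prod_l v_l^\ast(c_F(T_l))$ forces the relevant coefficient of $G$ to vanish, and any proof must use more than the shape of the supports. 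Your proposed fix via $\GL{g}(\ZZ)$-equivariance does not apply: aligning the large diagonal entries would require acting on the summands $T_1,\dots,T_s$ by \emph{different} unimodular matrices, which destroys the constraint $T_1+\cdots+T_s=T$, while acting by a single matrix only replaces $T$ by an equivalent matrix whose diagonal is not controlled---the set of $T$ with all $t_{i\!i}\le l$ is not $\GL{g}(\ZZ)$-stable, which is precisely the difficulty. (You have in fact put your finger on the crux: the paper disposes of this point in one sentence, asserting $\ord_\fp F_v>dk\big\slash\rhop$ from $\ord_\fp F>k\big\slash\rhop$; identifying the issue is valuable, but flagging it is not closing it.)

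There are also two secondary problems. The preliminary reduction to irreducible $\rho$ over $\Op$ is not available in general (no complete reducibility over the local ring, in particular when $p$ divides the order of the finite quotient $\Sp{g}(\ZZ)\slashdiv\ker\rho$), and even granting irreducibility over the fraction field, the orbit of a single $v^\ast$ is only guaranteed to span over that field: its reduction mod~$\fp$ may span a proper subspace (all orbit vectors can be congruent mod~$\fp$), so in your first case the passage from ``$v_l^\ast(F)\equiv 0$ for all $l$ in the orbit'' to ``$F\equiv 0\pmod{\fp}$'' does not follow. Both issues disappear if, as in the paper, you run the norm construction for \emph{every} $v\in V(\rho)^\ast(\Op)$---in particular for the coordinate functionals of an $\Op$-basis---and conclude coordinate-wise; your appeal to the domain property of the mod~$\fp$ Fourier ring is sound and is what makes that final deduction work. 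With or without these repairs, however, the quantitative step in your second case remains open, so the proposal as written does not prove the lemma.
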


\begin{proof}
Let $v$ be a linear form on $V(\rho)$, i.e., $v \in V(\rho)^\ast (\Op)$.  Then $\langle F, v \rangle := v \circ F$ is a scalar-valued Siegel modular form of weight $k$ for the group $\ker \rho$.  We obtain a scalar-valued Siegel modular form for the full group $\Sp{g}(\ZZ)$ via the standard construction (see also the proof of Proposition~1.4 of~\cite{Bru-WR-Fourier-Jacobi})
\begin{gather*}
  F_v
:=
  \prod_{M :\, \ker \rho \backslash \Sp{g}(\ZZ)} \langle F, v \rangle |_k\, M
=
  \prod_{M :\, \ker \rho \backslash \Sp{g}(\ZZ)} \langle F, \rho^\ast(M) v \rangle\in\rmM^{(g)}_{dk}
\text{,}
\end{gather*}
where $d := \big[ \ker\rho : \Sp{g}(\ZZ) \big]$.  Observe that $\rho^\ast(M) v \in V(\rho)^\ast (\Op)$, and hence the Fourier series coefficients of $F_v$ do belong to $\Op$.  The assumption $\ord_{\fp}\,F>k \big\slash \rhop$ implies that $\ord_{\fp}\,F_v> dk \big\slash \rhop$, and since $F_v$ is of weight $d k$, we find that $F_v \equiv 0 \pmod{\,\fp}$ for all $v$.  Hence $\langle F, v \rangle$ vanishes mod~$\,\fp$ for every $v$, which proves that $F \equiv 0 \pmod{\,\fp}$.
\end{proof}

The final result in this Section on the mod~$\,\fp$ diagonal vanishing orders of scalar-valued Jacobi forms and Siegel modular forms is an important ingredient in the proof of Theorem~\ref{thm:maintheorem} in the next Section.

\begin{proposition}
\label{prop:slope-bound-jacobi}
Suppose that there exists a mod~$\,\fp$ diagonal slope bound $\rhop$ for degree~$g \ge 1$.  Let $\phi \in \rmJ^{(g)}_{k,m}$ such that $\ord_{\fp}\,\phi>\frac{m}{4} + k \big\slash \rhop$.  Then $\phi\equiv 0\pmod{\,\fp}$.
\end{proposition}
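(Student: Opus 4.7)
The proof combines Lemmas~\ref{la:vanishing-order-of-jaocbi-specializations} and~\ref{la:slope-bound-vector-valued} with a discrete Fourier inversion argument, and I would run it by contradiction. Assume $\phi \not\equiv 0 \pmod{\,\fp}$. The plan is to produce an integer $N$ coprime to $p$ such that the torsion-point restriction $\phi[N]$ is also nonzero mod~$\,\fp$. Once such an $N$ is in hand, Lemma~\ref{la:vanishing-order-of-jaocbi-specializations} yields $\ord_{\fp}\phi[N] \ge \ord_{\fp}\phi - m/4 > k/\rhop$. Since $\rho_{[N]}$ is a permutation representation on $(\tfrac{1}{N}\ZZ^g \slashdiv N\ZZ^g)^2$, both $\rho_{[N]}$ and its dual are defined over $\ZZ \subset \Op$, so Lemma~\ref{la:slope-bound-vector-valued} applies to $\phi[N] \in \rmM^{(g)}_k(\rho_{[N]})$ and forces $\phi[N] \equiv 0 \pmod{\,\fp}$, a contradiction.

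The core task, and the main obstacle, is to show that nonvanishing of $\phi$ mod~$\,\fp$ is detected by some $\phi[N]$ with $\gcd(N, p) = 1$. Fix a Fourier coefficient $c(T_0, R_0) \not\equiv 0 \pmod{\,\fp}$. Specializing~\eqref{eq:jacobi-forms-restriction-to-torsion-points:fourier-coefficients} to $\alpha = 0$ shows that the Fourier coefficient of $e^{2\pi i\, \tr(T_0 \tau)}$ in the component $\phi[N]_{0, \beta}$ is
\begin{gather*}
  f(\beta) := \sum_R c(T_0, R)\, e^{2\pi i\, \beta R} \text{,}
\end{gather*}
a finite sum by the semi-definiteness $4m T_0 - R\,\rT R \ge 0$, which bounds $|r_i| \le 2\sqrt{m\,(t_0)_{i\!i}}$.

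Next I would choose $N$ coprime to $p$ and strictly larger than $4\sqrt{m\,\max_i (t_0)_{i\!i}}$, so that the finitely many contributing $R \in \ZZ^g$ lie in pairwise distinct residue classes of $\ZZ^g \slashdiv N\ZZ^g$. Character orthogonality on $\tfrac{1}{N}\ZZ^g \slashdiv N\ZZ^g$ then yields
\begin{gather*}
  N^{2g}\, c(T_0, R_0) \;=\; \sum_\beta e^{-2\pi i\,\beta R_0}\, f(\beta) \text{,}
\end{gather*}
and since $N^{2g}$ is a unit in $\Op$ by coprimality, some $f(\beta) \not\equiv 0 \pmod{\,\fp}$, whence $\phi[N] \not\equiv 0 \pmod{\,\fp}$ as required. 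The coprimality condition $\gcd(N, p) = 1$ is essential both here---to invert $N^{2g}$ mod $\fp$ and to ensure that the $N^2$-th roots of unity $e^{2\pi i m \beta \rT\alpha}$ appearing in $\phi[N]$ lie in $\Op$---and in the ambient setup of Lemma~\ref{la:slope-bound-vector-valued}.
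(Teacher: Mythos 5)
Your argument is correct, and it shares the paper's overall skeleton: restrict to torsion points, then combine Lemma~\ref{la:vanishing-order-of-jaocbi-specializations} with Lemma~\ref{la:slope-bound-vector-valued} (noting that $\rho_{[N]}$ and its dual are defined over $\Op$ and have finite-index kernel). Where you genuinely diverge is in the key ``detection'' step, i.e.\ that $\phi \not\equiv 0 \pmod{\,\fp}$ forces $\phi[N] \not\equiv 0 \pmod{\,\fp}$ for some $N$ coprime to $p$. The paper argues in the opposite order: it first concludes $\phi[N] \equiv 0 \pmod{\,\fp}$ for all such $N$, and then recovers $c(T,R) \equiv 0 \pmod{\,\fp}$ by induction on the diagonal entries of $T$, working with a single fixed prime $N$ satisfying $2m < N-2$; there the elliptic invariance under $\big|_{k,m}[I_g,(\lambda,0),0]$ reduces to $|r_i| \le m$, the induction hypothesis kills the contributions of larger $R$ that would collide modulo $N$, and the resulting square character matrix is inverted modulo $\fp$ because its determinant is, up to sign, a power of $N$ (the cyclotomic discriminant computation). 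You avoid all of that: by specializing to the $\alpha = 0$ components the $\tau$\nbd exponent is not shifted, so the coefficient at a fixed $T_0$ is exactly $f(\beta) = \sum_R c(T_0,R)e^{2\pi i \beta R}$, a finite sum; choosing $N$ coprime to $p$ and larger than $4\sqrt{m \max_i (t_0)_{i\!i}}$ separates the contributing $R$ modulo $N$, and full orthogonality over $\tfrac{1}{N}\ZZ^g \slashdiv N\ZZ^g$ gives $N^{2g} c(T_0,R_0) = \sum_\beta e^{-2\pi i \beta R_0} f(\beta)$ with $N^{2g}$ a unit in $\Op$. This buys a shorter proof with no induction on $T$, no use of the elliptic transformations, and no determinant evaluation; what the paper's version buys is uniformity in $N$: a single level $N > 2m+2$, depending only on $m$ and not on the coefficient being tested, already detects $\phi$ modulo $\fp$, which is slightly more information than the proposition itself requires, whereas your $N$ must grow with $T_0$. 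Both routes are complete proofs of the statement.
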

\begin{proof}
Let $\phi(\tau,z)=\sum_{T,R} c(T,R)\, e^{2\pi i\, (\tr(T \tau)+zR)}$. Lemmata~\ref{la:vanishing-order-of-jaocbi-specializations} and~\ref{la:slope-bound-vector-valued} imply that $\phi[N]\equiv 0\pmod{\,\fp}$ for all~$N$ that are relatively prime to $p$.  We prove by induction on the diagonal entries $(t_{i\!i})$ of $T$ that $c(T,R) \equiv 0 \pmod{\,\fp}$.  The constant Fourier series coefficient of $\phi[1]$ equals $c(0,0)$.  Hence $c(0,0) \equiv 0 \pmod{\,\fp}$, i.e., the base case holds.  Next, let $T$ be positive semi-definite and suppose that $c(T', R) \equiv 0 \pmod{\,\fp}$ for all $T'=(t'_{i\!j})$ with $t'_{i\!i} < t_{i\!i}$ for all~$i$. If $R = \rT(r_1, \ldots, r_g)$ such that $|r_i| > m$ for some~$i$, then (as in the proof of Lemma~\ref{la:vanishing-order-of-jaocbi-specializations}) use the modular invariance of $\phi$ to relate $c(T,R)$ to some $c(T',R')$ with $t'_{i\!i} < t_{i\!i}$.  That is, it suffices to show that $c(T, R) \equiv 0 \pmod{\,\fp}$ for $R$ with $-m \le r_i \le m$ for all~$i$.  Now, fix a prime~$N\not=p$ such that $2m < N - 2$. If $\beta= \rT(\beta_1, \ldots, \beta_g) \in \frac{1}{N} \ZZ^g$, then $\phi[N]\equiv 0\pmod{\,\fp}$ implies that (see also~\eqref{eq:jacobi-forms-restriction-to-torsion-points:fourier-coefficients}) 
\begin{gather*}
  \sum_{\substack{R\\ |r_i| \le \frac{N - 1}{2}}} c(T,R) e^{2\pi i\,\beta R}
\equiv
\sum_{R} c(T,R) e^{2\pi i\,\beta R}
  \equiv 0\pmod{\,\fp}
\text{,}
\end{gather*}
where the first congruence follows from the induction hypothesis and the assumption that $2m < N - 2$ (see also the proof of Lemma~\ref{la:vanishing-order-of-jaocbi-specializations}).  Note that $e^{2\pi i\,\beta R}$ are integers in the $N$\nbd th cyclotomic field. Moreover, if
\begin{gather*}
  A:=\big( e^{2\pi i\,\beta R} \big)
  _{\substack{R \in \ZZ^g,\, \frac{1-N}{2} < r_i \le \frac{N-1}{2}\\
              \beta \in \frac{1}{N}\ZZ^g,\, 0 \le N \beta_i \le N-2}}
\text{,}
\end{gather*}
then (observing that $N$ is prime) $\det A=(-1)^{N-1}N^{N-2}$ is the discriminant of the $N$\nbd th cyclotomic field.  In particular, $\det A \not\equiv 0 \pmod{\,\fp}$, and we conclude that $c(T,R) \equiv 0 \pmod{\,\fp}$.  
\end{proof}

\vspace{1ex}

\section{Slope bounds for Siegel modular forms}
\label{sec: Slope bounds for Siegel modular forms}

We prove by induction that there exists a diagonal slope bound $\rhop$ for Siegel modular forms of degree~$g\geq 1$, which then yields Theorem~\ref{thm:maintheorem} and Corollary~\ref{cor:maincorollary}.

\begin{proposition}
\label{prop:relative-sturm-bound}
If $\varrho^{(g-1)}_{\diag,\,\fp}$ is a diagonal slope bound for degree~$g-1$ Siegel modular forms, then $\rhop:= \frac{3}{4} \varrho^{(g-1)}_{\diag,\,\fp}$ is a diagonal slope bound for degree $g$ Siegel modular forms.
\end{proposition}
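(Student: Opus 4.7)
The plan is to argue by contradiction: suppose $F \in \rmM^{(g)}_k$ with $F \not\equiv 0 \pmod{\fp}$ and $L := \ord_{\fp} F > 4k/(3\varrho^{(g-1)}_{\diag,\fp})$, and decompose $F$ via its Fourier-Jacobi expansion along the last coordinate,
\begin{gather*}
F(Z) = \sum_{m\geq 0} \phi_m(\tau, z)\, e^{2\pi i m \tau'},\qquad Z = \left(\begin{smallmatrix} \tau & \rT z \\ z & \tau' \end{smallmatrix}\right),
\end{gather*}
with $\phi_m \in \rmJ^{(g-1)}_{k, m}$. The first step is to show $\phi_m \equiv 0 \pmod{\fp}$ for every $0 \leq m \leq L$: Fourier coefficients of $\phi_m$ appear as Fourier coefficients of $F$ at matrices $T = \left(\begin{smallmatrix} T_0 & r/2 \\ \rT r/2 & m \end{smallmatrix}\right)$, and when both $m \leq L$ and $\max_i (T_0)_{ii} \leq L$, the resulting $T$ has $\max_i t_{ii} \leq L$, hence $c(T) \equiv 0 \pmod \fp$ by the definition of $L$. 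Thus $\ord_\fp \phi_m \geq L$, and since the hypothesis $L > 4k/(3\varrho^{(g-1)}_{\diag,\fp})$ is equivalent to $m/4 + k/\varrho^{(g-1)}_{\diag,\fp} < L$ for every $m \leq L$, Proposition~\ref{prop:slope-bound-jacobi} (at degree $g-1$) forces $\phi_m \equiv 0 \pmod{\fp}$.

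Second, I would exploit the $\GL_g(\ZZ) \hookrightarrow \Sp_g(\ZZ)$ symmetry to propagate this vanishing beyond Fourier coefficients with $t_{gg} \leq L$: since $c(U^\rT T U) = \det(U)^{-k}\, c(T)$ for $U \in \GL_g(\ZZ)$, and since Minkowski reduction lets us choose $U$ with last column attaining the first minimum $\lambda_1(T) := \min_{v \in \ZZ^g \setminus \{0\}} v^\rT T v$, the vanishing of $\phi_m$ for $m \leq L$ upgrades to $c(T) \equiv 0 \pmod \fp$ for every $T$ with $\lambda_1(T) \leq L$. On the other hand, the definition of $L$ provides a matrix $T^\dagger$ with $c(T^\dagger) \not\equiv 0 \pmod \fp$ and $\max_i t^\dagger_{ii} = L+1$; combined with $\lambda_1(T^\dagger) \leq \min_i t^\dagger_{ii}$ and $\lambda_1(T^\dagger) \geq L+1$, one obtains $t^\dagger_{ii} = L+1$ for every $i$, and in particular $\phi_{L+1} \not\equiv 0 \pmod \fp$.

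The final step applies Proposition~\ref{prop:slope-bound-jacobi} once more, this time to $\phi_{L+1}$. Repeating the argument of the first step gives $\ord_\fp \phi_{L+1} \geq L$ (Fourier coefficients at $(T_0, r)$ with $\max_i (T_0)_{ii} \leq L$ correspond to $T$ with some $t_{ii} \leq L$, hence $\lambda_1(T) \leq L$ and $c(T) \equiv 0 \pmod \fp$ by the preceding step). Proposition~\ref{prop:slope-bound-jacobi} then yields $L \leq \ord_\fp \phi_{L+1} \leq (L+1)/4 + k/\varrho^{(g-1)}_{\diag,\fp}$, which rearranges to $3L \leq 1 + 4k/\varrho^{(g-1)}_{\diag,\fp}$ and contradicts $L > 4k/(3\varrho^{(g-1)}_{\diag,\fp})$. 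The hard part lies precisely here: the two inequalities alone only confine $L$ to a real interval of length $1/3$, so completing the contradiction requires careful use of the integrality of $L$ (and, in the boundary case, a refinement extracting a stronger lower bound on $\ord_\fp \phi_{L+1}$ from the rigidity that $T^\dagger$ has all diagonal entries equal to $L+1$).
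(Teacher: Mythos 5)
Your first two steps are sound and run parallel to the paper's proof (vanishing of the Fourier--Jacobi coefficients $\phi_m$ for small index via Proposition~\ref{prop:slope-bound-jacobi}, then spreading the vanishing with the $\GL{g}(\ZZ)$-symmetry), but the endgame has a genuine gap, which you yourself flag. From $\ord_{\fp}\phi_{L+1}\ge L$ and $\phi_{L+1}\not\equiv 0\pmod{\,\fp}$ you only get $L\le \frac{L+1}{4}+k\big\slash\varrho^{(g-1)}_{\diag,\,\fp}$, i.e.\ $L\le \frac{1}{3}+\frac{4k}{3\varrho^{(g-1)}_{\diag,\,\fp}}$, and this is perfectly consistent with $L>\frac{4k}{3\varrho^{(g-1)}_{\diag,\,\fp}}$ whenever the fractional part of $\frac{4k}{3\varrho^{(g-1)}_{\diag,\,\fp}}$ is at least $\frac23$; integrality of $L$ alone cannot close this. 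Worse, the repair you propose --- a stronger lower bound on $\ord_{\fp}\phi_{L+1}$ --- is impossible: your matrix $T^\dagger$ has all diagonal entries equal to $L+1$ and $c(T^\dagger)\not\equiv 0\pmod{\,\fp}$, so its upper-left $(g-1)\times(g-1)$ block indexes a coefficient of $\phi_{L+1}$ that is nonzero mod $\fp$ and has all diagonal entries $L+1$; hence $\ord_{\fp}\phi_{L+1}$ equals $L$ exactly. Any fix along your lines would have to strengthen Proposition~\ref{prop:slope-bound-jacobi} itself, feeding in the finer information that every nonvanishing coefficient of $\phi_{L+1}$ has all diagonal entries at least $L+1$ (rather than just the diagonal vanishing order), and that is a genuinely different statement whose proof would require reworking Section~\ref{sec: Vanishing orders of Jacobi forms}.

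The paper avoids this corner case entirely by never stopping at the single index $L+1$: it shows by induction on the Fourier--Jacobi index that \emph{every} $\phi_m\equiv 0\pmod{\,\fp}$. For $m\le k\big\slash\rhop$ one has $\ord_{\fp}\phi_m\ge \ord_{\fp}F>k\big\slash\rhop\ge \frac m4+k\big\slash\varrho^{(g-1)}_{\diag,\,\fp}$, and for $m>k\big\slash\rhop$ the induction hypothesis (all $\phi_{m'}$ with $m'<m$ vanish mod $\fp$) gives a lower bound on $\ord_{\fp}\phi_m$ of size $m$ itself, which grows with $m$ and beats the threshold $\frac m4+k\big\slash\varrho^{(g-1)}_{\diag,\,\fp}$ precisely because $m>k\big\slash\rhop$ --- there is no ``$L$ versus $L+1$'' mismatch and hence no loss of $\frac13$. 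The conclusion is then $F\equiv 0\pmod{\,\fp}$, contradicting the choice of $F$, rather than a numerical contradiction about $L$. If you want to rescue your write-up, replace your third step by this induction over all indices $m$; your $\lambda_1$-observation from step two is exactly the right tool to supply the needed lower bound on $\ord_{\fp}\phi_m$ at each stage, but you must keep the bound growing with $m$ instead of freezing it at $L$.
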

\begin{proof}
Suppose that there exists an $0 \not\equiv F \in \rmM^{(g)}_k$ whose diagonal slope modulo~$\fp$ is less than $\rhop= \frac{3}{4} \varrho^{(g-1)}_{\diag,\,\fp}$, i.e., the diagonal vanishing order of $F$ is greater than $k \big\slash \rhop$. Consider Fourier-Jacobi coefficients $0\not\equiv\phi_m\in \rmJ^{(g-1)}_{k,m}$ of~$F$.  If $m \leq k \big\slash \rhop$, then 
\begin{gather*}
 \ord_{\fp}\,\phi_m
>
  \frac{k}{\rhop}
\geq
  \frac{m}{4} + \frac{3}{4} \frac{k}{\rhop}
=
  \frac{m}{4} + \frac{k}{\varrho^{(g-1)}_{\diag, \fp}}
\,\text{,}
\end{gather*}
and Proposition~\ref{prop:slope-bound-jacobi} implies that $\phi_m \equiv 0 \pmod{\,\fp}$.

If $m > k \big\slash \rhop$, then an induction on $m$ shows that $\phi_m \equiv 0 \pmod{\,\fp}$.  More specifically, fix an index $m$ and suppose that $\phi_{m'} \equiv 0 \pmod{\,\fp}$ for all $m' < m$.  Thus, the mod~$\fp$ diagonal vanishing order of $\phi_m$ is at least $m$, and we apply again Proposition~\ref{prop:slope-bound-jacobi} to find that $\phi_m \equiv 0 \pmod{\,\fp}$.  Hence $F\equiv 0 \pmod{\,\fp}$, which yields the claim.
\end{proof}

Proposition~\ref{prop:relative-sturm-bound} holds for any prime ideal $\fp$ in $\Qpur$, and hence also for the rational prime $p$. As a consequence we discover explicit slope bounds, which immediately imply Theorem~\ref{thm:maintheorem}.

\begin{theorem}
\label{thm:slope-bounds}
Let $g\geq 1$.  There exist a diagonal slope bound $\varrho^{(g)}_{\diag, p}$  such that
\begin{gather*}
    \varrho^{(g)}_{\diag, p} 
\ge
  16 \cdot \Big(\frac{3}{4}\Big)^g
\text{.}
\end{gather*}
If, in addition,  $g \ge 2$ and $p \ge 5$, then
\begin{gather*}
   \varrho^{(g)}_{\diag, p} 
\ge
  \frac{160}{9} \cdot \Big(\frac{3}{4}\Big)^g
\text{.}
\end{gather*}
\end{theorem}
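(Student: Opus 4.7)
The approach is a straightforward induction on $g$ in which Proposition~\ref{prop:relative-sturm-bound} supplies the inductive step. Both asserted bounds have the shape $C \cdot (3/4)^g$, and Proposition~\ref{prop:relative-sturm-bound} gives $\varrho^{(g)}_{\diag, p} \ge \tfrac{3}{4}\, \varrho^{(g-1)}_{\diag, p}$, so once the inequality is established at some $g_0$ it propagates to every $g \ge g_0$. The only real task is therefore to anchor the induction with a classical Sturm-type theorem.

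For the first bound $\varrho^{(g)}_{\diag, p} \ge 16 \cdot (3/4)^g$, I would take $g = 1$ as the base case. Sturm's classical bound~\cite{Sturm-LNM1987} asserts that a nonzero elliptic modular form of weight $k$ on $\mathrm{SL}_2(\mathbb{Z})$ with $p$\nbd integral rational Fourier coefficients has ordinary vanishing order at infinity at most $k/12$; for $g = 1$ the diagonal vanishing order of~\eqref{eq:def:Siegel-diagonal-vanishing-order} coincides with this ordinary vanishing order, so this yields $\varrho^{(1)}_{\diag, p} \ge 12 = 16 \cdot (3/4)$. Iterated application of Proposition~\ref{prop:relative-sturm-bound} then produces the claimed bound for every $g \ge 1$.

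For the improved bound $\varrho^{(g)}_{\diag, p} \ge \frac{160}{9} \cdot (3/4)^g$ valid under $p \ge 5$ and $g \ge 2$, I would anchor the induction at $g = 2$ and invoke the degree\nbd two Sturm theorem of Choi, Choie, and Kikuta~\cite{C-C-K-Acta2013}, which under the hypothesis $p \ge 5$ supplies $\varrho^{(2)}_{\diag, p} \ge 10$. Since $\tfrac{160}{9} \cdot (3/4)^2 = 10$ on the nose, the base case matches exactly, and the same inductive application of Proposition~\ref{prop:relative-sturm-bound} propagates this bound to all $g \ge 2$.

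The only point requiring care is to check that the cited classical results really bound the diagonal slope of~\eqref{eq:def:diagonal-slope-bound}, as opposed to some coarser invariant. For $g = 1$ this is automatic because diagonal and ordinary Fourier coefficients coincide. For $g = 2$ one verifies that the Choi-Choie-Kikuta bound is formulated in terms of diagonal entries of $T$, which is the form in which such Sturm bounds are typically stated. Beyond this bookkeeping there is no further obstacle, since all of the new analytic content of the paper has already been absorbed into Proposition~\ref{prop:relative-sturm-bound}.
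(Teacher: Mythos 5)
Your proposal is correct and coincides with the paper's own proof: the paper anchors the induction at $\varrho^{(1)}_{\diag, p} = 12$ via Sturm and, for $p \ge 5$, at $\varrho^{(2)}_{\diag, p} = 10$ via Choi--Choie--Kikuta, then iterates Proposition~\ref{prop:relative-sturm-bound} exactly as you describe. No gaps.
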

\begin{proof}
We apply Proposition~\ref{prop:relative-sturm-bound} to the base case $\varrho^{(1)}_{\diag, p} = 12$ (see~\cite{Sturm-LNM1987}), and if $p\geq 5$, to the base case $\varrho^{(2)}_{\diag, p} = 10$ (see~\cite{C-C-K-Acta2013}).
\end{proof}

\begin{example}
If $p\geq 5$, then for $g=3,4,5,6$ we obtain
\begin{gather*}
  \varrho^{(3)}_{\diag, p} \ge 7.5
\text{,}\quad
  \varrho^{(4)}_{\diag, p} \ge 5.6
\text{,}\quad
  \varrho^{(5)}_{\diag, p} \ge 4.2
\text{,}\quad
  \varrho^{(6)}_{\diag, p} \ge 3.1
\text{.}
\end{gather*}
\end{example}

Finally, we prove Corollary~\ref{cor:maincorollary}.

\begin{proof}[Proof of Corollary~\ref{cor:maincorollary}]
Let $F \in \rmM^{(g)}_{k}$ with rational Fourier series coefficients $c(T)$ such that $c(T)\in\ZZ$ for all $T=(t_{ij})$ with $t_{ii} \le \big( \frac{4}{3} \big)^g \frac{k}{16}$ for all~$i$.  Note that $F$ has bounded denominators (this follows from~\cite{Chai-Fal}), i.e., there exists an $0 < l \in \ZZ$ such that $l F \in \rmM^{(g)}_{k}$ has integral Fourier series coefficients.  Let $l$ be minimal with this property.  We need to show that $l = 1$.  If $l\not=1$, then there exists a prime $q$ such that $q \isdiv l$.  Hence $lc(T)\equiv 0\pmod{q}$  for all $T$ with $t_{ii} \le \big( \frac{4}{3} \big)^g \frac{k}{16}$, and Theorem~\ref{thm:maintheorem} asserts that $l c(T)\equiv 0\pmod{q}$ for all $T$.  This contradicts the minimality of~$l$, and we conclude that $l = 1$.
\end{proof}


\renewbibmacro{in:}{}
\renewcommand{\bibfont}{\normalfont\small\raggedright}
\renewcommand{\baselinestretch}{.8}
\Needspace*{4em}
\printbibliography[heading=bibnumbered]



\addvspace{1em}
\titlerule[0.15em]\addvspace{0.5em}

{\noindent\small
Department of Mathematics,
University of North Texas,
Denton, TX 76203, USA\\
E-mail: \url{richter@unt.edu}\\
Homepage: \url{http://www.math.unt.edu/~richter/}
\\[1.5ex]
Max Planck Institute for Mathematics,
Vivatsgasse~7,
D-53111, Bonn, Germany\\
E-mail: \url{martin@raum-brothers.eu}\\
Homepage: \url{http://raum-brothers.eu/martin}
}

\end{document}
